\DeclareMathOperator{\inte}{int}
\newtheorem{theorem}{Theorem}[section]
\newtheorem{lemma}[theorem]{Lemma}
\newtheorem{proposition}[theorem]{Proposition}
\theoremstyle{definition}
\newtheorem{definition}[theorem]{Definition}
\theoremstyle{remark}
\numberwithin{equation}{section}
\begin{document}

\title[Hyperspaces of continua with connected boundaries]{Hyperspaces of continua with connected boundaries in $\pi$-Euclidean Peano continua}

\author{Pawe{\l}  Krupski}

\email{pawel.krupski@pwr.edu.pl}
\address{Department of Computer Science, Faculty of Fundamental Problems of Technology, Wroc\l aw University of Science and Technology, Wybrze\.{z}e Wyspia\'nskiego 27, 50-370 Wroc\l aw, Poland.}
\thanks{The author was partially supported by the Faculty of Fundamental Problems of Technology, Wroc\l aw University of Science and Technology, grant 0401/0017/17}

\date{\today}
\subjclass[2010]{Primary 57N20; Secondary 54H05, 54F15}
\keywords{absorber, Hilbert cube, hyperspace, $\pi$-Euclidean space}

\begin{abstract} Let $X$ be a nondegenerate Peano unicoherent continuum.  The  family $CB(X)$ of proper subcontinua of $X$ with connected boundaries is a  $G_\delta$-subset of the hyperspace $C(X)$ of all subcontinua of $X$. If every nonempty open subset of  $X$ contains an open subset homeomorphic to  $\mathbb R^n$ (such space is called $\pi$-$n$-Euclidean) and $2\le n<\infty$,  then $C(X)\setminus CB(X)$ is  recognized as an $F_\sigma$-absorber in $C(X)$; if additionally,  no one-dimensional subset separates $X$, then the family of all members of $CB(X)$ which separate $X$ is a $D_2(F_\sigma)$-absorber in $C(X)$, where $D_2(F_\sigma)$ denotes the small Borel  class of  differences of two $\sigma$-compacta.
\end{abstract}

\maketitle

All continua in the paper are metric. For a continuum $X$,
we consider the hyperspaces
$$2^{X}=\{A\subset X: \text{$A$ is closed and nonempty}\}$$ and $$C(X)= \{A\in  2^X: \text{$A$ is connected}\}$$
 with the Hausdorff metric. Define
 $$CB(X)= \{A\in C(X)\setminus \{X\}: \text{$Bd(A)$ is connected}\},$$
 where $Bd(A)$ denotes the boundary of $A$ in $X$.

\section{Evaluation of the Borel complexity  of $CB(X)$}

 K. Kuratowski observed in~\cite[p. 156]{Ku} that, for any compact nondegenerate $X$, the function $\alpha:2^X\setminus \{X\} \to 2^X$,    $\alpha(A)=\overline{X\setminus A}$ is lower semicontinuous, hence of the first Borel class, while the function $Bd: 2^X\setminus \{X\} \to 2^X$, $A\mapsto Bd(A)=A\cap \overline{X\setminus A} $, is of the second  Borel class for any nondegenerate continuum $X$.
 It means that, for each nondegenerate continuum $X$, the preimage $\alpha^{-1}(D)$ of a closed subset $D\subset 2^X$ is $G_\delta$  and the preimage $Bd^{-1}(D)$ is $F_{\sigma\delta}$ in $2^X\setminus \{X\}$. It follows that the preimages are also $G_\delta$ and  $F_{\sigma\delta}$ subsets of $2^X$.   Since $CB(X)=C(X)\cap Bd^{-1}(C(X))$, we have the following proposition.

 \begin{proposition}\label{p1}
 If $X$ is a nondegenerate continuum, then $CB(X)$  is an $F_{\sigma\delta}$-subset of $2^X$.
 \end{proposition}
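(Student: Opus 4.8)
The plan is to read off the complexity of $CB(X)$ from the factorization $CB(X) = C(X) \cap Bd^{-1}(C(X))$ already recorded above, assembling it from the Borel classes of the two factors and the stability of the class $F_{\sigma\delta}$ under the relevant set operations. All of the genuinely analytic work has been done by Kuratowski: once his computation that $Bd$ is of the second Borel class is granted, the proposition reduces to bookkeeping in the Borel hierarchy.

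First I would recall the classical fact that $C(X)$ is a closed subset of $2^X$, since a Hausdorff limit of subcontinua is again a subcontinuum. Being closed in the metric space $2^X$, the set $C(X)$ is in particular $F_{\sigma\delta}$. Moreover $\{X\}$ is closed, so $C(X)\setminus\{X\}$ is the intersection of the closed set $C(X)$ with the open set $2^X\setminus\{X\}$; as every open subset of a metric space is $F_\sigma$, this difference is $F_\sigma$, hence a fortiori $F_{\sigma\delta}$. Next I would apply Kuratowski's observation that $Bd\colon 2^X\setminus\{X\}\to 2^X$ is of the second Borel class to the target $C(X)$, which is closed; this yields that $Bd^{-1}(C(X))$ is $F_{\sigma\delta}$ in $2^X\setminus\{X\}$, and therefore also in $2^X$. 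Because $Bd^{-1}(C(X))\subset 2^X\setminus\{X\}$ automatically omits the point $X$, the intersection $C(X)\cap Bd^{-1}(C(X))$ coincides with $(C(X)\setminus\{X\})\cap Bd^{-1}(C(X))$, which is precisely $CB(X)$.

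Finally I would combine the two estimates: $CB(X)$ is the intersection of the $F_{\sigma\delta}$-set $C(X)\setminus\{X\}$ with the $F_{\sigma\delta}$-set $Bd^{-1}(C(X))$, and since $F_{\sigma\delta}$ is closed under finite intersections, $CB(X)$ is $F_{\sigma\delta}$. There is no serious obstacle in this argument, precisely because the hard part is cited rather than reproved; the only point requiring a moment's care is verifying that deleting the singleton $\{X\}$ does not raise the complexity, which is settled by the observation that $2^X\setminus\{X\}$ is open and hence $F_\sigma$.
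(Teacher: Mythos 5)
Your proof is correct and follows essentially the same route as the paper: it cites Kuratowski's result that $Bd$ is of the second Borel class, pulls back the closed set $C(X)$ to get an $F_{\sigma\delta}$-set, and intersects with $C(X)$ via the identity $CB(X)=C(X)\cap Bd^{-1}(C(X))$. The only difference is that you spell out the bookkeeping the paper leaves implicit (that $2^X\setminus\{X\}$ is open, hence $F_\sigma$, so passing from relative to ambient $F_{\sigma\delta}$-complexity is harmless), which is a worthwhile clarification but not a different argument.
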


Actually,  in many cases the Borel class of $CB(X)$ can be reduced. For example, one can easily see that
\begin{enumerate}
\item
if $X$ is the circle $S^1$ then  $CB(X)\overset{top}=S^1$ is   compact;
\item
 $CB(I)$    is homeomorphic to the real line $\mathbb R$, so it is an absolute $G_\delta$-set;
 \item
 if $X$ is an indecomposable nondegenerate continuum, then the family $CB(X)$ is also $G_\delta$ since it is equal to $C(X)\setminus \{X\}$.
 \end{enumerate}

\begin{lemma}\label{l1}
If $X$ is a nondegenerate  unicoherent continuum then   $CB(X)=C(X)\cap \alpha^{-1}(C(X))$.
\end{lemma}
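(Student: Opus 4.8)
The plan is to reduce the claimed set equality to a pointwise equivalence and then prove the two implications separately, invoking unicoherence in only one of them. Unwinding the definitions, a set $A$ lies in $C(X)\cap\alpha^{-1}(C(X))$ precisely when $A$ is a proper subcontinuum of $X$ whose complementary closure $\alpha(A)=\overline{X\setminus A}$ is connected, whereas $A\in CB(X)$ precisely when $A$ is a proper subcontinuum with $Bd(A)$ connected. Since $Bd(A)=A\cap\overline{X\setminus A}=A\cap\alpha(A)$, and since $A$ being proper together with $X$ being connected forces $Bd(A)\neq\emptyset$, the whole statement comes down to proving, for every proper subcontinuum $A$ of $X$, that $\alpha(A)$ is connected if and only if $Bd(A)$ is connected.

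For the implication ``$\alpha(A)$ connected $\Rightarrow Bd(A)$ connected,'' I would invoke unicoherence directly. The key observation is that $X=A\cup\overline{X\setminus A}=A\cup\alpha(A)$. If $\alpha(A)$ is connected, this exhibits $X$ as a union of two subcontinua, so unicoherence yields that their intersection $A\cap\alpha(A)=Bd(A)$ is connected. This is the one step where the hypothesis on $X$ is genuinely used.

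For the converse I would argue by contraposition and show that a separation of $\alpha(A)$ forces a separation of $Bd(A)$, using only connectedness of $X$. Suppose $\alpha(A)=C_1\cup C_2$ with $C_1,C_2$ disjoint, nonempty, and closed. Then $Bd(A)=(A\cap C_1)\cup(A\cap C_2)$ is a decomposition into disjoint closed sets, so it suffices to verify that each $A\cap C_i$ is nonempty. The main obstacle, and the only point deserving care, is exactly this nonemptiness: if, say, $A\cap C_1=\emptyset$, then $C_1\subseteq X\setminus A$; since $X\setminus A$ is open in $X$ and hence open in $\overline{X\setminus A}$, while $C_1$ is clopen in $\overline{X\setminus A}$, one checks that $C_1$ is open in $X$ as well. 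As $C_1$ is also closed in $X$, nonempty, and proper (it misses the nonempty set $A$), this contradicts connectedness of $X$. Hence both $A\cap C_i$ are nonempty and $Bd(A)$ is disconnected. Combining the two implications gives the asserted equality $CB(X)=C(X)\cap\alpha^{-1}(C(X))$.
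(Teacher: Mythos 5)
Your proposal is correct, and half of it is exactly the paper's argument: the implication ``$\alpha(A)$ connected $\Rightarrow Bd(A)$ connected'' is obtained in both proofs by writing $X=A\cup\overline{X\setminus A}$ as a union of two subcontinua and applying unicoherence. The difference lies in the other implication. The paper disposes of it by citing a classical fact of Kuratowski~\cite[Corollary 5, p.~133]{Ku1}: if a connected space is the union of two closed sets with connected intersection, then the summands are connected; applied to $X=A\cup\overline{X\setminus A}$ with $Bd(A)=A\cap\overline{X\setminus A}$ connected, this gives connectedness of $\overline{X\setminus A}$ at once. You instead prove this implication from scratch, by contraposition: a separation $\alpha(A)=C_1\cup C_2$ induces the decomposition $Bd(A)=(A\cap C_1)\cup(A\cap C_2)$ into disjoint closed sets, and each piece is nonempty because a piece $C_i$ missing $A$ would be clopen in $X$ (it is closed in $X$ as a closed subset of the closed set $\overline{X\setminus A}$, and open in $X$ because it equals the intersection of an open-in-$\overline{X\setminus A}$ set with the open set $X\setminus A$), contradicting connectedness of $X$. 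This clopen argument is sound, and it is essentially a self-contained proof of the very special case of Kuratowski's theorem that the lemma needs; notably, your version uses only connectedness of $X$ and the structure of $\overline{X\setminus A}$, not connectedness of $A$ or of $Bd(A)$. What each approach buys: the paper's proof is shorter by delegating the nontrivial direction to a citation, while yours is elementary and self-contained, which makes the role of each hypothesis (unicoherence in one direction, mere connectedness of $X$ in the other) completely transparent.
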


\begin{proof}
If $C\in CB(X)$,  then $Bd(C)=C\cap \overline{X\setminus C}$ is connected and since $X=C\cup \overline{X\setminus C}$ is connected, the set $\overline{X\setminus C}$ is also connected by a well known fact~\cite[Corollary 5, p. 133]{Ku1}. Thus $C\in C(X)\cap \alpha^{-1}(C(X))$.

The converse implication follows directly from the unicoherence of $X$.
\end{proof}

Since $\alpha$ is of the first Borel class, Lemma~\ref{l1} implies the following proposition.

\begin{proposition}\label{p2}
If $X$ is a nondegenerate  unicoherent continuum then   $CB(X)$ is a $G_\delta$-subset of $C(X)$.
\end{proposition}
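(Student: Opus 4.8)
The plan is to combine Lemma~\ref{l1} with Kuratowski's observation that $\alpha$ is of the first Borel class. By Lemma~\ref{l1} we have the identity $CB(X)=C(X)\cap\alpha^{-1}(C(X))$, so it suffices to show that $\alpha^{-1}(C(X))$ is a $G_\delta$-set and then to pass to the trace on $C(X)$. I would begin by recalling the standard hyperspace fact that $C(X)$ is a closed subset of $2^X$. Since $\alpha$ is of the first Borel class, the preimage under $\alpha$ of every closed subset of $2^X$ is $G_\delta$; applying this to the closed set $C(X)$ shows that $\alpha^{-1}(C(X))$ is $G_\delta$ in the domain $2^X\setminus\{X\}$ of $\alpha$.

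The one point requiring care is the bookkeeping about domains, since $\alpha$ is defined only on $2^X\setminus\{X\}$ whereas $C(X)$ contains the point $X$. I would dispose of this by noting that $2^X\setminus\{X\}$ is open, and hence $G_\delta$, in $2^X$, so that $\alpha^{-1}(C(X))$ is in fact $G_\delta$ in all of $2^X$. Finally, taking the intersection with $C(X)$ and using that the trace of a $G_\delta$-subset of $2^X$ on the subspace $C(X)$ is again $G_\delta$ in $C(X)$, I would conclude that $CB(X)=C(X)\cap\alpha^{-1}(C(X))$ is $G_\delta$ in $C(X)$.

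Given Lemma~\ref{l1} and Kuratowski's observation, I do not expect a genuine obstacle here: the argument is a routine transfer of the Borel class of $\alpha$ through a preimage and a restriction to a subspace. The only subtlety is to keep the ambient spaces straight—in particular to remember that the membership of $X$ in $C(X)$ is harmless, because $X\notin CB(X)$ and because the excluded singleton $\{X\}$ has open complement in $2^X$, so passing from "$G_\delta$ in $2^X\setminus\{X\}$" to "$G_\delta$ in $C(X)$" costs nothing.
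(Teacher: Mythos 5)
Your proof is correct and is essentially the paper's own argument: the paper derives Proposition~\ref{p2} in one line from Lemma~\ref{l1} and the fact that $\alpha$ is of the first Borel class, with the domain bookkeeping (passing from $G_\delta$ in $2^X\setminus\{X\}$ to $G_\delta$ in $2^X$, using that $C(X)$ is closed) already handled in Section~1. You have simply spelled out the same steps explicitly.
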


\section{Subcontinua with connected boundaries in $\pi$-Euclidean unicoherent Peano continua}

Recall that a family $\mathcal U$ of nonempty open subsets of a space $Y$ is a $\pi$-base of $Y$ if each nonempty open subset  of $Y$ contains some $U\in\mathcal U$.

\begin{definition}
A space $Y$ is said to be   $\pi$-Euclidean if there exist $n\in\mathbb N$ and a $\pi$-base of $Y$ whose elements are homeomorphic to $\mathbb R^n$;  such $Y$ will be also called $\pi$-$n$-Euclidean.
\end{definition}

In this section we assume that $X$ is a nondegenerate  unicoherent Peano continuum without free arcs.
Then the hyperspace $C(X)$ is a Hilbert cube~\cite{CS}.

\

One can immediately notice  that every subcontinuum of $X$ is approximated  (in the Hausdorff metric) by arcs  and the arcs  are nowhere dense in $X$, so they belong to $CB(X)$. Thus, we have
\begin{proposition}\label{p3}
$CB(X)$ is a dense $G_\delta$-subset of $C(X)$.
\end{proposition}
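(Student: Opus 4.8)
The plan is to establish two things: that $CB(X)$ is $G_\delta$ in $C(X)$, which is already provided by Proposition~\ref{p2}, and that $CB(X)$ is dense in $C(X)$. Since $X$ is a unicoherent Peano continuum without free arcs, Proposition~\ref{p2} gives the $G_\delta$ part for free, so the entire content of the proof reduces to verifying density. For density I would argue that the arcs in $X$ form a dense subset of $C(X)$ and that every arc belongs to $CB(X)$.

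First I would recall that $X$, being a nondegenerate Peano continuum, is arcwise connected and locally arcwise connected; in fact the arcs are dense in $C(X)$ in the Hausdorff metric. The standard reason is that any subcontinuum $A$ of a Peano continuum can be approximated arbitrarily closely by a finite union of small connected pieces, and by local connectedness these can be threaded together into a single arc lying within an $\varepsilon$-neighborhood of $A$; equivalently, one uses that the arcs are dense in $2^X$ for a Peano continuum and that they already lie in $C(X)$. This gives a dense set of arcs in $C(X)$.

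Next I would verify that each such arc $L$ lies in $CB(X)$. The key observation the excerpt already flags is that, because $X$ has no free arcs, every arc $L$ is nowhere dense in $X$, so $\inte L=\emptyset$ and therefore $Bd(L)=L$. Since $L$ is an arc it is connected, hence $Bd(L)=L$ is connected, and as a proper nowhere dense subcontinuum $L\neq X$; thus $L\in CB(X)$ by definition. Combining this with the previous paragraph, $CB(X)$ contains the dense family of arcs and is therefore dense in $C(X)$.

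The only delicate point is the density of arcs in $C(X)$, and within that, ensuring the approximating arc is genuinely nowhere dense; but the no-free-arc hypothesis guarantees every arc has empty interior, so there is no obstacle there. The density of arcs in the hyperspace is a classical fact about Peano continua, so the main work is simply assembling these observations. Together with Proposition~\ref{p2}, the density of $CB(X)$ yields that $CB(X)$ is a dense $G_\delta$-subset of $C(X)$, completing the proof.
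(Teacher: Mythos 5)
Your proof is correct and follows essentially the same route as the paper: the paper's argument is exactly that arcs are dense in $C(X)$, that the no-free-arc hypothesis makes every arc nowhere dense in $X$ (so its boundary is the arc itself, hence connected), and that combining this density with Proposition~\ref{p2} gives the result. You have merely spelled out the details the paper leaves implicit.
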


Imposing yet an additional structure on $X$ we can fully characterize the family $CB(X)$ in the next theorem which is our main result of this section.

Notice that if $n\ge 2$ then a  $\pi$-$n$-Euclidean space contains no free arcs.

\begin{theorem}\label{t1}
If $X$ is $\pi$-$n$-Euclidean, $n\ge 2$, then there is a homeomorphism $h:I^\infty \to C(X)$ such that $h((0,1)^\infty)=  CB(X)$.
\end{theorem}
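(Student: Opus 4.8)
The plan is to recognize the pair $(C(X),CB(X))$ as a copy of the pair $(I^\infty,(0,1)^\infty)$ by proving that the complement $M:=C(X)\setminus CB(X)$ is an $F_\sigma$-absorber (a \emph{cap set}) in the Hilbert cube $C(X)$, and then to invoke the uniqueness of such absorbers: any two $F_\sigma$-absorbers in the Hilbert cube are carried onto each other by an ambient homeomorphism, and the pseudo-boundary $I^\infty\setminus(0,1)^\infty$ is one of them. Granting this, a homeomorphism $C(X)\to I^\infty$ matching $M$ with the pseudo-boundary automatically sends $CB(X)$ onto the pseudo-interior $(0,1)^\infty$, and its inverse is the desired $h$. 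The ingredients already in hand are that $C(X)$ is a Hilbert cube (by~\cite{CS}) and that $CB(X)$ is a dense $G_\delta$ in $C(X)$ (Proposition~\ref{p3}); in particular $M$ is $\sigma$-compact with dense complement. What remains is to verify the two defining properties of an $F_\sigma$-absorber: that $M$ is a $Z_\sigma$-set, and that $M$ is strongly universal for the class of compacta.

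For the first property I would show that $CB(X)$ is homotopy dense in $C(X)$, equivalently that $M$ is a countable union of $Z$-sets. This should follow by promoting the observation behind Proposition~\ref{p3} --- that every subcontinuum is Hausdorff-approximated by arcs, which lie in $CB(X)$ --- to a small deformation of $C(X)$ into $CB(X)$: using the order-arc (convex-like) structure of $C(X)$ one slightly enlarges each continuum so that its boundary becomes connected while moving it an arbitrarily small Hausdorff distance, and a homotopy pushing $C(X)$ off $M$ witnesses the $Z_\sigma$-property.

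The main obstacle is strong universality, and here I would exploit the $\pi$-$n$-Euclidean hypothesis with $n\ge 2$. Given a compactum $K$, a closed set $D\subset K$, a map $f\colon K\to C(X)$ and $\varepsilon>0$, the goal is a $Z$-embedding $g\colon K\to C(X)$ that is $\varepsilon$-close to $f$ and satisfies $g^{-1}(M)=D$, i.e.\ $g(k)\notin CB(X)$ exactly when $k\in D$. By Lemma~\ref{l1}, landing in $M$ amounts to producing a subcontinuum $A$ with $\overline{X\setminus A}$ disconnected; the model for this is an \emph{annular} modification inside a Euclidean chart $U\cong\mathbb R^n$: one thickens the approximating continuum to fill a small ball and then bores a round hole of radius $r\ge 0$, so that for $r>0$ the closure of the complement gains an extra component (disconnected boundary, a point of $M$), while $r=0$ fills the hole (connected boundary, a point of $CB(X)$). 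Because $n\ge 2$ there is transverse room to carry out this construction continuously over $K$, to choose the radius function so that it vanishes precisely on $D$, and to keep the entire family within Hausdorff distance $\varepsilon$ of $f$; a final general-position step --- $Z$-embeddings are dense in the mapping space $C(K,C(X))$ since $C(X)$ is a Hilbert cube --- upgrades the resulting map to a $Z$-embedding without disturbing the preimage condition.

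Combining the two properties shows that $M$ is an $F_\sigma$-absorber, and uniqueness then produces the ambient homeomorphism carrying $M$ to the pseudo-boundary, hence the required $h$. I expect the delicate points to be the simultaneous control, over all of $K$, of (a) the Hausdorff size of the modifications, (b) the exact locus on which the boundary becomes disconnected, and (c) global injectivity together with the $Z$-embedding property; the hypothesis $n\ge 2$ enters precisely in guaranteeing enough room for the hole construction, which is exactly what fails in the presence of free arcs.
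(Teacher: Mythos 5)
Your overall strategy coincides with the paper's: prove that $M=C(X)\setminus CB(X)$ is an $F_\sigma$-absorber in the Hilbert cube $C(X)$ and then invoke uniqueness of absorbers to carry $M$ onto the pseudo-boundary (so $CB(X)$ goes onto $(0,1)^\infty$). Two of your three ingredients are fine but could be streamlined: membership of $M$ in the class $F_\sigma$ is Proposition~\ref{p2}, and the $\sigma Z$-condition you propose to establish via homotopy density of $CB(X)$ is in fact redundant for the $F_\sigma$ class in the presence of the other two properties (the paper simply cites \cite[Theorem 5.3]{BGM}), so that delicate deformation argument can be skipped entirely. The genuine gap is in the step that carries all the weight: strong $F_\sigma$-universality. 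Your construction encodes membership in $M$ by a \emph{single} hole whose radius is a continuous function $r\colon K\to[0,\infty)$. Whichever way you orient the convention, the hole is present exactly on $\{k: r(k)>0\}$, so the locus where $g(k)$ has disconnected boundary is an \emph{open} subset of $K$. It can therefore never equal a prescribed closed set $D$ (unless $D$ is clopen): already $D=\{\mathrm{pt}\}$ in $K=[0,1]$ is unreachable, so your construction does not meet even your own formulation of the goal, let alone realize arbitrary $\sigma$-compact preimages, which is what the class $F_\sigma$ demands. This is exactly why the paper's Proposition~\ref{p4} uses an \emph{infinite} sequence of independent holes: the scaled sets $D_k(t_k)$ converging to a point are assembled into $\Theta\bigl((t_k)\bigr)$, whose boundary is connected if and only if \emph{every} $t_k$ lies in $(0,1)$; thus $\Theta^{-1}(M)=\partial(I^\infty)$, and composing with embeddings witnessing the known strong $F_\sigma$-universality of the model pseudo-boundary yields arbitrary $F_\sigma$ preimages. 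The quantifier ``some coordinate is extreme,'' ranging over infinitely many independent parameters, is what generates $F_\sigma$ complexity; one continuously varying hole generates only open sets.

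There is a second, related gap: when $r(k)=0$ you need $g(k)\in CB(X)$, i.e.\ connected boundary, but your ``thickened approximating continuum'' inherits whatever boundary pathology $f(k)$ has outside the Euclidean chart, over which you have no control. The paper's Lemma~\ref{l5} (following the Banakh--Cauty technique via \cite[Section 3.2]{KS}) avoids this by discarding $f(q)$ altogether: $g(q)$ is built as a one-dimensional core $A(q)$ Hausdorff-close to $f(q)$ --- one-dimensional subsets of a $\pi$-$n$-Euclidean space with $n\ge 2$ are nowhere dense, so $Bd\bigl(A(q)\bigr)=A(q)$ is connected --- with finitely many small copies $\varphi_{U_i}(q)$ of the encoding embedding attached at single points, so that membership of $g(q)$ in $CB(X)$ is governed solely by the encoded part. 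Finally, your statement of strong universality omits the relative condition $g|K=f|K$ on a compactum where $f$ is already a $Z$-embedding, which is needed to run the uniqueness theorem \cite[Theorem 5.5.2]{M}. Repairing these points --- infinitely many holes instead of one, a nowhere dense core instead of a thickening, and the relative approximation --- essentially reconstructs the paper's proof.
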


 In proving the theorem we will rather concentrate on the complement $C(X)\setminus CB(X)$ and show that it is an $F_{\sigma}$-absorber  in  $C(X)$. For a basic terminology and facts on such absorbers the reader is referred to~\cite{M} and~\cite{Ca}.   Recall here that, given a class $\mathcal M$   of spaces which is topological (i.e., if $M\in \mathcal M $ then each homeomorphic image of $M$ is in $\mathcal M$) and closed hereditary (i.e., each closed subset of $M\in \mathcal M$ is in $\mathcal M$), a subset $A$ of a Hilbert cube $\mathcal Q$ is called an   $\mathcal M$-\emph{absorber}  in $\mathcal Q$ if
 \begin{enumerate}
\item $A\in \mathcal Q$,
\item $A$ is strongly $\mathcal M$-universal, i.e.  for each
$M\subset I^\omega$ from the class $\mathcal M$ and
each  compact set $K\subset I^\omega$, any embedding $f:I^\omega \to\mathcal Q$ such that
$f(K)$ is a $Z$-set in $\mathcal Q$
can be approximated arbitrarily closely
 by an embedding $g:I^\omega\to \mathcal Q$ such that
   $g(I^\omega)$ is a $Z$-set in $\mathcal Q$,
$g|K=f|K$ and $g^{-1}(A)\setminus K=M\setminus K$.
\item $A$ is contained in a $\sigma Z$-set in $\mathcal Q$.
\end{enumerate}
If $A\subset\mathcal Q$ is an $\mathcal M$-absorber in  $\mathcal Q$ and $B\subset I^\infty$ is an $\mathcal M$-absorber in $I^\infty$, then there is a homeomorphism $h:I^\infty\to \mathcal Q$ such that $h(B)=A$~\cite[Theorem 5.5.2]{M}.

We will be interested in two classes $\mathcal M$: the Borel class of $F_{\sigma}$-subsets of the Hilbert cube and the small Borel class $D_2(F_\sigma)$ of all subsets of the Hilbert cube that are differences of two $F_\sigma$-sets.
In the first case, property (3) above is redundant in presence of (1) and (2) (see~\cite[Theorem 5.3]{BGM}).
The pseudo-boundary $\partial(I^\infty)= \{(t_i)\in I^\infty: \text{$t_i\in\{0,1\}$ for some $i$}\}$ is a standard $F_{\sigma}$-absorber in $I^\infty$, while  $\partial(I^\infty)\times (0,1)^\infty$ is a $D_2(F_\sigma)$-absorber in  $I^\infty\times I^\infty$.

 \begin{proposition}\label{p4}
If $X$ contains an open subset homeomorphic to $\mathbb R^n$, $2\le n<\infty$, then $C(X)\setminus CB(X)$ is $F_{\sigma}$-universal, i.e., for each $F_{\sigma}$-subset $M$ of the Hilbert cube $I^\infty$,  there is an embedding $\varphi:I^\infty\to C(X)$ such that
\begin{equation}\label{varphi}
\varphi^{-1}\bigl(C(X)\setminus CB(X)\bigr)=M.
\end{equation}
\end{proposition}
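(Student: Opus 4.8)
The plan is to exploit the Euclidean chart to build, for a prescribed $F_\sigma$ set $M$, a family of proper subcontinua of $X$ whose failure to lie in $CB(X)$ is controlled coordinate‑by‑coordinate. Fix an open $U\subset X$ with $U\cong\mathbb R^n$ and a closed round ball $\bar B\subset U$; since $U$ is open in $X$, boundaries computed inside $\bar B$ agree with boundaries in $X$, and $\overline{X\setminus\bar B}$ is connected because $\partial\bar B\cong S^{n-1}$ is connected (as $n\ge2$) and $X$ is unicoherent (Lemma~\ref{l1}). Write $M=\bigcup_{k\ge1}M_k$ with each $M_k\subset I^\infty$ compact. I would place pairwise disjoint open balls $O_k\subset\inte\bar B$ with centres $c_k\to p_\infty\in\partial\bar B$ and radii tending to $0$, and to each $O_k$ attach a thin open ``channel'' $T_k(t)$ running from $O_k$ to a nearby point of $\partial\bar B$, of width $\sigma_k(t)=\min\{\operatorname{dist}(t,M_k),\varepsilon_k\}$ with $\varepsilon_k\to0$. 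Then set
\[
\varphi(t)=\bar B\setminus\bigcup_{k\ge1}\bigl(O_k\cup T_k(t)\bigr).
\]
Each $\varphi(t)$ is a compact connected proper subset of $X$, i.e.\ a member of $C(X)\setminus\{X\}$.

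The role of Lemma~\ref{l1} is to replace the analysis of $Bd(\varphi(t))$ by the easier analysis of $\overline{X\setminus\varphi(t)}$, which equals $\overline{X\setminus\bar B}\cup\bigcup_k\overline{O_k\cup T_k(t)}$ (the only new accumulation point, $p_\infty$, already lies on $\partial\bar B$). When $t\notin M_k$ the channel $T_k(t)$ has positive width and reaches $\partial\bar B$, so $\overline{O_k\cup T_k(t)}$ meets the connected piece $\overline{X\setminus\bar B}$; when $t\in M_k$ we have $\sigma_k(t)=0$, the channel disappears, and $\overline{O_k}$ is a closed ball lying in $\inte\bar B$, disjoint from every other piece, hence a separate component. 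Consequently $\overline{X\setminus\varphi(t)}$ is connected precisely when every channel is present, i.e.\ exactly when $t\notin\bigcup_kM_k=M$. By Lemma~\ref{l1} this says $\varphi(t)\in CB(X)$ iff $t\notin M$, so $\varphi^{-1}\bigl(C(X)\setminus CB(X)\bigr)=M$, which is~\eqref{varphi}.

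It remains to arrange that $\varphi$ is an embedding. Continuity is handled by the width bound: since the holes and channels near $p_\infty$ are uniformly small, the symmetric difference of two removed families is a union of strips whose widths are dominated by $\sup_k|\sigma_k(t)-\sigma_k(t')|$ together with the finitely many relevant radius changes, and these tend to $0$ as $t'\to t$; thus $t\mapsto\varphi(t)$ is Hausdorff‑continuous. For injectivity I would let the hole radii carry the parameter: choose $\rho_k(t)=r_k(1+t_k)/2\in[r_k/2,r_k]$, so that the family $(\rho_k(t))_k$ recovers $t=(t_k)_k$. Allowing the radii to vary in this range leaves the preceding connectivity discussion untouched (each $O_k$ is still a nonempty ball attaching to the outside exactly when its channel is present), while it makes $t\mapsto\varphi(t)$ injective. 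A continuous injection of the compact space $I^\infty$ into the metric space $C(X)$ is automatically an embedding.

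The main obstacle is the tension between the two requirements on $\varphi$. The disconnecting event must occur on the closed sets $M_k$, which forces the ``defect'' (a missing channel) to be the \emph{closed} condition $\sigma_k(t)=0$ rather than a positive‑radius (open) condition; this is exactly what makes an $F_\sigma$ target, rather than a $G_\delta$ one, come out correctly. At the same time the boundary‑connectivity of $\varphi(t)$ is insensitive to the actual values of the $\sigma_k$, so it cannot by itself produce an injective map; the separate hole‑radius encoding is introduced precisely to restore injectivity without disturbing the Borel bookkeeping. The remaining care is purely geometric: choosing the $c_k$, $r_k$, $\varepsilon_k$ and the channel directions so that the (closed $\varepsilon_k$‑fattened) holes and channels are pairwise disjoint, accumulate only at the single boundary point $p_\infty$, and keep each $\varphi(t)$ connected.
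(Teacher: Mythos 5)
Your construction is correct (up to routine geometric bookkeeping, at the same level of detail as the paper's own argument), but it follows a genuinely different route. The paper builds one fixed coordinate‑parametrized embedding $\Theta:I^\infty\to C(X)$ --- scaled copies of the cube‑minus‑cube $D$, each with a notch of size $t_k(1-t_k)$, chained together and accumulating at a point, with segments $L_k(t_k)$ attached to force injectivity --- so that $\Theta^{-1}\bigl(C(X)\setminus CB(X)\bigr)$ is exactly the pseudo-boundary $\partial(I^\infty)$; an arbitrary $F_\sigma$ set $M$ is then handled by composing with an embedding $\chi$ satisfying $\chi^{-1}\bigl(\partial(I^\infty)\bigr)=M$, which exists by the (cited) strong $F_\sigma$-universality of $\partial(I^\infty)$. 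You skip that reduction step entirely: writing $M=\bigcup_k M_k$ with $M_k$ compact, you encode each $M_k$ directly into the geometry through the distance functions $\sigma_k(t)=\min\{\operatorname{dist}(t,M_k),\varepsilon_k\}$, so that a channel is missing precisely when $t\in M_k$ (closedness of $M_k$ is what makes the vanishing locus correct, as you note). Structurally the two constructions are parallel --- countably many defect sites accumulating at one point of a Euclidean chart, plus a separate injectivity device (your variable hole radii play the role of the paper's segments $L_k$), since the connectivity data alone cannot be injective. What the paper's version buys: the boundary of its model continuum is computed \emph{locally} in the chart, so no unicoherence is needed for this proposition, and the single model $\Theta$ is reused verbatim later (in Proposition~\ref{p6}, where $\Phi=\Psi\cup\Theta$); your per-$M$ construction could not serve that purpose. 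What yours buys: it is self-contained, trading the universality theorem for $\partial(I^\infty)$ for an elementary decomposition plus distance functions, and it handles any $F_\sigma$ set in one stroke. Two small slips, neither fatal: the assertion that ``boundaries computed inside $\bar B$ agree with boundaries in $X$'' is false as stated (the relative boundary of $\bar B$ in itself is empty), though you never actually use it; and the connectedness of $\overline{X\setminus\bar B}$ does not come from unicoherence but from the Kuratowski fact quoted in the proof of Lemma~\ref{l1} (connected boundary implies connected complement closure) --- unicoherence is needed only for the reverse implication, which is the one you genuinely use when $t\notin M$.
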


\begin{proof}
For convenience,  assume that $X$ contains $(-3,3)^n$ as an open subset.
Let us construct an embedding
$\Theta:I^\infty\to C(X)$ such that
\begin{equation}
 \Theta\bigl((t_k)\bigr)\in C(X)\setminus CB(X)\quad\text{if and only if}\quad (t_k)\in \partial(I^\infty).
\end{equation}

Denote $D:=I^n\setminus (\frac13,\frac23)^n$ and, for $t\in [0,1]$,  $$D(t):=D\setminus \bigl(\frac13,\frac13(1+t)\bigr)\times \bigl[0,\frac13\bigr]\times \bigl(\frac13,\frac23\bigr)^{n-2}$$
(let us agree that $D(t)=  D\setminus \left(\frac13,\frac13(1+t)\right)\times \left[0,\frac13\right]$ if $n=2$ and $D(0)=D$),
$$D_k(t):= \left(\frac1{2^k}, 0,\dots, 0\right) + \frac1{2^k}D\bigl(t(1-t)\bigr),\quad k\in\mathbb N,$$ and consider segments
 $L_k(t)$ in  $(-3,3)^n$ from $$\left(\frac1{2^k}, \frac{-t}{2^k}, 0,\dots, 0\right) \quad\text{to}\quad \left(\frac1{2^k}, \frac{1+t}{2^k},0,\dots,0\right).$$

So, $D$ is the cube $I^n$ with the smaller open cube $D'=(\frac13,\frac23)^n$ subtracted; its boundary in  $(-3,3)^n$ has two components.   $D(t)$ is obtained from $D$ by removing a cube $D'(t)=(\frac13,\frac13(1+t))\times [0,\frac13]\times (\frac13,\frac23)^{n-2}$ adjacent to the both components; the size of  $D'(t)$ continuously depends on parameter $t$ so that the boundary of $D(t)$ is connected if and only if $t>0$.
The sets $D_k(t_k)$ are copies of $D\bigl(t_k(1-t_k)\bigr)$ scaled by
 factors $\frac1{2^k}$ and shifted by vectors $\left(\frac1{2^k}, 0,\dots, 0\right)$, correspondingly.
 The union of all $D_k(t_k)$'s compactified by the point $(0,\dots, 0)$  is a continuum whose  boundary is  connected if and only if each parameter $t_k$ is strictly between 0 and 1;  moreover, the continuum continuously depends on sequence $(t_k)$ but not in a one-to-one fashion. In order to get the one-to-one correspondence, we attach  segments $L_k(t_k)$ (Figure~\ref{fig}). Formally,
we define an embedding
\begin{equation}\label{e:theta}
\Theta\bigl((t_k)\bigr)=\{(0,\dots, 0)\} \cup \bigcup_{k=1}^\infty \bigl(L_k(t_k) \cup D_k(t_k)\bigr)\subset  (-3,3)^n.
\end{equation}

\begin{figure}[ht]
\setlength{\unitlength}{1mm}
\begin{picture}(135,40)
\thicklines

\put(80,0){\line(1,0){9}}
\put(95,0){\line(1,0){12}}
\put(80,27){\line(1,0){27}}
\put(107,0){\line(0,1){27}}
\put(80,-8){\line(0,1){43}}

\put(82,32){\makebox(0,0){\scriptsize$\}t_1$}}
\put(92,-3){\makebox(0,0){\scriptsize$\underbrace{}$}}
\put(92,-5){\makebox(0,0){\scriptsize$t_1(1-t_1)$}}

\put(89,0){\line(0,1){18}}
\put(95,0){\line(0,1){9}}
\put(89,18){\line(1,0){9}}
\put(95,9){\line(1,0){3}}
\put(98,9){\line(0,1){9}}

\put(62,18){\line(1,0){18}}
\put(62,12){\line(0,1){13}}
\put(62,0){\line(1,0){6}}
\put(71,0){\line(1,0){9}}
\put(68,0){\line(0,1){12}}
\put(68,12){\line(1,0){6}}
\put(74,6){\line(0,1){6}}
\put(71,0){\line(0,1){6}}
\put(71,6){\line(1,0){3}}
\put(62,-7){\line(0,1){7}}

\put(64,22){\makebox(0,0){\scriptsize$\}t_2$}}
\put(70,-3){\makebox(0,0){\scriptsize$\underbrace{}$}}
\put(70,-5){\makebox(0,0){\scriptsize$t_2(1-t_2)$}}

\put(50,0){\framebox(12,12)}
\put(54,4){\framebox(4,4)}

\put(42,-2){\line(0,1){12}}
\put(42,0){\line(1,0){3}}
\put(46,0){\line(1,0){4}}
\put(46,0){\line(0,1){3}}
\put(46,3){\line(1,0){1}}
\put(47,3){\line(0,1){2}}
\put(42,8){\line(1,0){8}}
\put(45,0){\line(0,1){5}}
\put(46,0){\line(1,0){4}}
\put(45,5){\line(1,0){2}}

\put(54,14){\makebox(0,0){\tiny$t_3=0$}}

\put(45,9){\makebox(0,0){\tiny$\}t_4$}}

\put(32,0){\makebox(0,0){$\cdots$}}

\put(20,0){\makebox(0,0){\textbf{$\cdot$}}}

\put(19,-2){\makebox(0,0){\scriptsize$(0,0)$}}

\end{picture}
\vspace{.5cm}
\caption{ $\Theta\bigl((t_1,t_2,0, t_4 \dots)\bigr)$ for $n=2$.}\label{fig}
\end{figure}
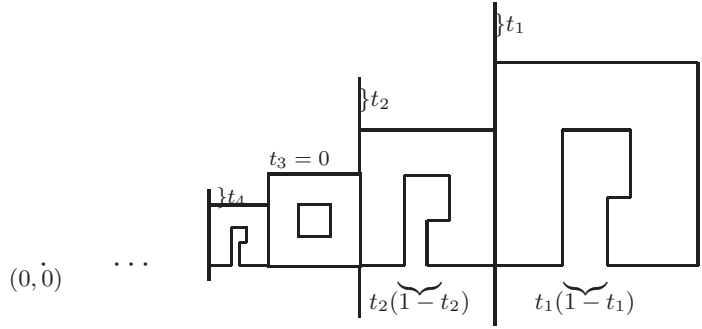

Since  the pseudo-boundary
 $\partial(I^\infty)$ is strongly $F_\sigma$-universal,  it is $F_\sigma$-universal, in particular. So,   there exists, for each $ F_\sigma$-set $M\subset I^\infty$,  an embedding $\chi: I^\infty \to I^\infty$ such that
$\chi^{-1}(\partial(I^\infty))=M$. Hence, the composition $\varphi=\Theta\chi:I^\infty \to C(X)$  satisfies~\eqref{varphi}.

\end{proof}

\newpage

 \begin{lemma}\label{l5}
If $X$ is $\pi$-$n$-Euclidean, $2\le n<\infty$, then $C(X)\setminus CB(X)$ is strongly $F_{\sigma}$-universal.
\end{lemma}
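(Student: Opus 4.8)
The plan is to upgrade the plain $F_{\sigma}$-universality established in Proposition~\ref{p4} to the strong form, i.e.\ to verify property~(2) of an absorber for $A:=C(X)\setminus CB(X)$ in the Hilbert cube $\mathcal Q:=C(X)$. The guiding idea is to run the localized gadget construction $\Theta$ of Proposition~\ref{p4} \emph{fiberwise} over an arbitrary prescribed $Z$-embedding, using that a $\pi$-$n$-Euclidean $X$ supplies copies of $\mathbb R^n$ inside every nonempty open set, hence arbitrarily small charts near any given subcontinuum. Before the main construction I would record two preliminary facts: the standard Hilbert-cube $Z$-set approximation theorem (any embedding $f$ with $f(K)$ a $Z$-set is approximable, rel $K$, by an embedding with $Z$-set image), and the homotopy density of $CB(X)$ in $C(X)$, equivalently that $A$ is a $\sigma Z$-set. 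The latter I would derive from Proposition~\ref{p3}: every subcontinuum is Hausdorff-approximable by nowhere dense subcontinua (e.g.\ arcs), all of which lie in $CB(X)$, and these approximations assemble into a deformation of $C(X)$ into $CB(X)$.

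Fix an $F_{\sigma}$-set $M\subset I^\omega$, a compact $K\subset I^\omega$, an embedding $f\colon I^\omega\to C(X)$ with $f(K)$ a $Z$-set, and $\varepsilon>0$; write $M\setminus K=\bigcup_{j} C_j$ as an increasing union of compacta. The first step is a general-position move: using the $Z$-set approximation theorem together with homotopy density of $CB(X)$, I would approximate $f$ by an embedding $f_0$ with $f_0|K=f|K$, with $f_0(I^\omega)$ a $Z$-set, and with $f_0(I^\omega\setminus K)\subset CB(X)$. After this move every point off $K$ is provisionally ``good'' (connected boundary), and it remains only to reinstall the ``bad'' condition precisely along $M\setminus K$.

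The core step reinstalls the defect by grafting. For each stage $j$ I would attach to $f_0(x)$, through a short segment of the type $L_k(t)$ reaching into a chart $U_x\cong\mathbb R^n$ that meets $f_0(x)$ and has diameter tending to $0$ with $j$, a scaled copy of a gadget $D_k(t)$ whose single defect parameter is driven by a continuous function that attains the boundary-breaking value $0$ exactly on $C_j$ and stays strictly interior off $C_j$ (a normalized distance to $C_j$, additionally damped to vanish near $K$ so that $g|K=f|K$). Exactly as in Proposition~\ref{p4}, the boundary of the resulting continuum is disconnected if and only if some stage is at its endpoint value, i.e.\ if and only if $x\in\bigcup_j C_j=M$; the attached segments preserve injectivity, smallness of the charts keeps the perturbation within $\varepsilon$ and the modified image inside a $Z$-set, and the shrinking supports guarantee that the staged embeddings form a Cauchy sequence. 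Its limit $g$ is then an embedding with $g|K=f|K$, $g(I^\omega)$ a $Z$-set, and $g^{-1}(A)\setminus K=M\setminus K$, which is strong $F_{\sigma}$-universality.

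The main obstacle is the coherent, parametrized execution of the grafting: choosing the charts $U_x$ and the attached gadgets to depend continuously on the Hilbert-cube coordinate $x\in I^\omega$ while simultaneously keeping $g$ injective, keeping $g(I^\omega)$ a $Z$-set, and matching the disconnected-boundary locus exactly to the \emph{non-compact} $F_{\sigma}$ target $M$. Controlling all three conditions at once is what forces the staged limiting argument, and the delicate points are the continuous selection of a Euclidean chart meeting the moving continuum $f_0(x)$ and the verification that injectivity and the $Z$-set property of the image survive passage to the limit; here the density of Euclidean charts furnished by the $\pi$-$n$-Euclidean hypothesis is precisely what makes the continuous choice of $U_x$ possible near an arbitrary $Z$-embedded copy $f_0(I^\omega)$.
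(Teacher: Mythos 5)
Your overall plan---localize the gadget of Proposition~\ref{p4} into small Euclidean charts and graft it parametrically onto an approximation of $f$---is the right general idea, but the proposal has a genuine gap exactly where the real work lies. The paper's proof is short because it invokes the Banakh--Cauty machinery \cite{BC,Ca} in the concrete form worked out in \cite[Section 3.2]{KS}: there the approximating embedding $g$ is built so that, for $q\notin K$, the continuum $g(q)$ consists of a canonical \emph{one-dimensional} part $A(q)$ (a graph varying continuously with $q$, which handles closeness to $f$, the rel-$K$ condition, injectivity, and the $Z$-set property of the image) together with finitely many localized gadgets $\varphi_{U_i}(q)$ attached to $A(q)$ at single points; the only new content needed is the observation that such attachments leave the membership in $C(X)\setminus CB(X)$ equivalent to $q\in M$. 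You instead propose to rebuild this machinery ad hoc, and the steps you would need---continuous (in $x$) selection of charts, attaching arcs and gadgets, injectivity of the limit, $Z$-set image---are precisely what you flag as ``the main obstacle'' and then dispose of by asserting that density of Euclidean charts ``makes the continuous choice of $U_x$ possible.'' That is not an argument: first-hitting-point and chart selections along a moving continuum $f_0(x)$ are in general badly discontinuous, and no Cauchy/limit scheme is actually specified. Without either citing or reconstructing the \cite{KS} construction, the proof is incomplete at its core.

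Your two-step scheme also introduces problems that the paper's route avoids. First, the existence of $f_0$ with $f_0|K=f|K$ and $f_0(I^\omega\setminus K)\subset CB(X)$ is itself the $M=\emptyset$ instance of the strong universality being proved; deriving it from ``homotopy density'' of $CB(X)$, which you in turn infer from the pointwise density in Proposition~\ref{p3}, is circular in flavor and in any case unproved (pointwise approximability by arcs does not assemble into a deformation without an argument). Second, and more seriously, after this step $f_0(x)$ is an \emph{arbitrary} member of $CB(X)$, possibly with large interior; a chart $U_x$ that merely ``meets $f_0(x)$'' may lie inside $\inte f_0(x)$, where a defective gadget is swallowed and disconnects nothing, so the ``only if'' direction of $g^{-1}\bigl(C(X)\setminus CB(X)\bigr)\setminus K=M\setminus K$ fails. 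The boundary bookkeeping ($Bd$ of the union equals the union of the boundaries, with the inner component of a defective gadget split off) requires the core to be nowhere dense and the gadgets to poke into the complement with controlled single-point attachments---this is exactly what the one-dimensionality of $A(q)$ in \cite{KS} guarantees and what your $f_0(x)$ lacks. Finally, a smaller but real slip: you damp the \emph{defect parameter} to $0$ near $K$; since the parameter value $0$ is the boundary-breaking value, this manufactures continua with disconnected boundary at points near $K$ that need not lie in $M$. What must vanish near $K$ is the \emph{size} of the attached gadgets, while every defect parameter stays strictly positive off $\bigcup_j C_j$.
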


\begin{proof}
The proof is based on a  technique developed in~\cite{BC,Ca}. For our purpose, we  closely follow its rough description given in~\cite[Section 3.2]{KS}.
Without loss of generality, we can assume that
  $\varphi$ from Proposition~\ref{p4}  satisfies
  \begin{equation}\label{e:varphi}
\varphi(I^\infty)\subset C\bigl((0,1)^n\bigr)\subset C(I^n)\subset C(X).
\end{equation}

Given an $F_{\sigma}$-subset  $M$ of $I^\omega$,
  for each open non-empty subset $U$ of $X$ and an open copy of $(0,1)^n$ in $U$, let $\varphi_U:I^\omega \to C(U)$ be a composition of $\varphi$ with an embedding of $C(I^n)$ into the hyperspace of that copy. Then
  $$\varphi_U^{-1}\bigl(C(X)\setminus CB(X)\bigr)=M$$
   which means that

  \emph{$\varphi_U(q)\in C(X)\setminus CB(X)$ if and only if  $\varphi(q)\in C(X)\setminus CB(X)$ if and only if $q\in M$.}

   Notice that the equivalences remain valid if, given $q\notin K$, we add to  the one-dimensional part $A(q)$,  appearing in the construction of embedding $g(q)$, finitely many pairwise disjoint sets $\varphi_{U_i}(q)$, $i< m$ for some $m\in\mathbb N$,  such that  $A(q)\cap \varphi_{U_i}(q)$ is a singleton for each $i$.  It follows that $g^{-1}\bigl(C(X)\setminus CB(X)\bigr)\setminus K=M\setminus K$.
  Now, the construction of embedding $g$, as presented in~\cite[Section 3.2]{KS}, satisfies all the required conditions.

\end{proof}

\begin{proof}[Proof of Theorem~\ref{t1}]
By Proposition~\ref{p2} and Lemma~\ref{l1}, $C(X)\setminus CB(X)$ is an $F_\sigma$-absorber in $\mathcal Q=C(X)$. Hence, there exists a homeomorphism $h:I^\infty \to C(X)$ such that $h\bigl(\partial(I^\infty)\bigr) = C(X)\setminus CB(X)$.
\end{proof}

\subsection*{Separators with connected boundaries}

\

Now, let
$$S(X):=\{C\in 2^X: \text{$C$ separates $X$}\},\quad N(X):= \{C\in 2^X: \inte C=\emptyset\}.$$

By~\cite[Proposition 5.1]{KS} and Proposition~\ref{p2}, we get

\begin{proposition}\label{p5}
 $S(X)\cap CB(X)\in D_2(F_\sigma)$.
 \end{proposition}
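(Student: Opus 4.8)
The plan is to write $S(X)\cap CB(X)$ as the intersection of an $F_\sigma$-set with a $G_\delta$-set inside the Hilbert cube $C(X)$, and then to observe that any such intersection is automatically a difference of two $F_\sigma$-sets, hence a member of $D_2(F_\sigma)$. Since $CB(X)\subset C(X)$, we have the identity
\[
S(X)\cap CB(X)=\bigl(S(X)\cap C(X)\bigr)\cap CB(X),
\]
so it suffices to control the two factors separately and then combine them.

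For the first factor I would invoke \cite[Proposition 5.1]{KS}, which identifies the family $S(X)\cap C(X)$ of separating subcontinua as an $F_\sigma$-subset of $C(X)$. For the second factor I would use Proposition~\ref{p2}: since $X$ is a nondegenerate unicoherent continuum, $CB(X)$ is a $G_\delta$-subset of $C(X)$, and therefore its complement $C(X)\setminus CB(X)$ is an $F_\sigma$-subset of $C(X)$.

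It then remains to assemble these. Writing $P=S(X)\cap C(X)$ and $F=C(X)\setminus CB(X)$, both of which are $F_\sigma$ in $C(X)$, we obtain
\[
S(X)\cap CB(X)=P\cap CB(X)=P\setminus F,
\]
a difference of two $F_\sigma$-sets. By the very definition of the class $D_2(F_\sigma)$ this places $S(X)\cap CB(X)$ in $D_2(F_\sigma)$, as claimed. (If one prefers the difference to be taken with nested sets, one may replace $F$ by $P\cap F$, which is still $F_\sigma$.)

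The argument is essentially descriptive-set-theoretic bookkeeping, so the only place where genuine content enters is external to it. The real obstacle — showing that the separators among subcontinua form an $F_\sigma$-family — is precisely what is borrowed from \cite[Proposition 5.1]{KS}, while the unicoherence hypothesis needed to upgrade $CB(X)$ from $F_{\sigma\delta}$ (as in Proposition~\ref{p1}) to a genuine $G_\delta$ is exactly what Proposition~\ref{p2} supplies. With both facts in hand, the passage to $D_2(F_\sigma)$ is immediate.
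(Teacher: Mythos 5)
Your proof is correct and is essentially the paper's own argument: the paper proves Proposition~\ref{p5} by simply citing \cite[Proposition 5.1]{KS} (the $F_\sigma$-ness of the separators) together with Proposition~\ref{p2} (the $G_\delta$-ness of $CB(X)$), exactly the two ingredients you combine. Your write-up just makes explicit the routine set-theoretic step $F_\sigma \cap G_\delta = F_\sigma \setminus F_\sigma \in D_2(F_\sigma)$ that the paper leaves implicit.
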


 It is proved in~\cite[Theorem 5.8]{KS} that  $S(X)\cap N(X)\cap C(X)$ is a $D_2(F_\sigma)$-absorber in $C(X)$ if $X$ satisfies hypotheses of Theorem~\ref{t1}, $n\ge3$ and no subset of dimension $\le 1$ separates $X$.
In a similar way, we will show that $S(X)\cap CB(X)$ is also a $D_2(F_\sigma)$-absorber in $C(X)$ for such $X$.

\begin{proposition}\label{p6}
If $X$ contains an open subset homeomorphic to  $\mathbb R^n$, $2\le n<\infty$, then $S(X)\cap CB(X)$ is $D_2(F_\sigma)$-universal in $C(X)$, i.e., for each $D_2(F_\sigma)$-subset $M$ of  $I^\infty$,  there is an embedding $f:I^\infty\to C(X)$ such that
\begin{equation}\label{f}
f^{-1}\bigl(S(X)\cap CB(X)\bigr)=M.
\end{equation}
\end{proposition}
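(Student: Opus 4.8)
The plan is to mimic Proposition~\ref{p4}: build one model embedding and pull back a universal map. Since $\partial(I^\infty)\times(0,1)^\infty$ is a $D_2(F_\sigma)$-absorber in $I^\infty\times I^\infty$, it is in particular $D_2(F_\sigma)$-universal, so for the given $M$ there is an embedding $\psi:I^\infty\to I^\infty\times I^\infty$ with $\psi^{-1}\bigl(\partial(I^\infty)\times(0,1)^\infty\bigr)=M$. It then suffices to construct an embedding $\Psi:I^\infty\times I^\infty\to C(X)$, all of whose values are proper subcontinua contained in $(-3,3)^n$, such that $\Psi^{-1}\bigl(S(X)\cap CB(X)\bigr)=\partial(I^\infty)\times(0,1)^\infty$; the composition $f=\Psi\circ\psi$ then satisfies~\eqref{f}.

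Writing a point as $\bigl((s_k),(r_k)\bigr)$, I would let the $(r_k)$-coordinates govern the boundary and the $(s_k)$-coordinates govern separation, by superimposing two independent families of pieces shrinking to the origin. For the boundary I reuse $\Theta$ from Proposition~\ref{p4}: the part $\Theta((r_k))$ has connected boundary exactly when all $r_k\in(0,1)$, and in that regime none of its solid shells $D_k(r_k)$ separates (its channel is open). For separation I attach, along a disjoint sequence of cells also converging to the origin, nowhere dense ``spheres'' $Q_k(s_k)$ — each a topological $(n-1)$-sphere with an open cap of diameter $s_k(1-s_k)$ deleted — joined to the backbone of $\Theta$ by a marker arc $\ell_k(s_k)$ of length linear in $s_k$. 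One then sets
$$\Psi\bigl((s_k),(r_k)\bigr)=\Theta((r_k))\cup\bigcup_{k=1}^\infty\bigl(\ell_k(s_k)\cup Q_k(s_k)\bigr).$$
Being a nowhere dense continuum, each $Q_k(s_k)$ coincides with its own boundary, so it always has connected boundary, while it separates $X$ iff the cap is absent, i.e. iff $s_k\in\{0,1\}$. The marker arcs $L_k,\ell_k$ record $r_k,s_k$ and make $\Psi$ injective; continuity in the Hausdorff metric is clear, and, as everything shrinks to the origin, every value is a proper subcontinuum.

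The two conditions now decouple. Since each $Q_k(s_k)$ is nowhere dense with connected boundary and is attached to the backbone, the boundary of $\Psi\bigl((s_k),(r_k)\bigr)$ is connected iff that of the $\Theta$-part is, i.e. iff $(r_k)\in(0,1)^\infty$; hence $\Psi\bigl((s_k),(r_k)\bigr)\in CB(X)$ iff $(r_k)\in(0,1)^\infty$. For separation we may assume $(r_k)\in(0,1)^\infty$ (otherwise the value lies outside $CB(X)$ and there is nothing to prove), so that no $D_k(r_k)$ separates. Then the continuum separates $X$ iff some $Q_k(s_k)$ is a full sphere — which encloses a region of $X$ meeting no other piece — iff some $s_k\in\{0,1\}$, i.e. iff $(s_k)\in\partial(I^\infty)$. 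Combining the two equivalences yields $\Psi^{-1}\bigl(S(X)\cap CB(X)\bigr)=\partial(I^\infty)\times(0,1)^\infty$, as required.

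The main obstacle is the ``only if'' half of the separation equivalence: when every cap is present and every channel open, one must show that no \emph{spurious} separation occurs, i.e. that the complement of the whole union is connected. Each piece individually fails to separate, but one has to rule out that the infinitely many disjoint pieces, their connecting arcs, and the accumulation point at the origin together trap some region; this is where the placement of the cells and the orientation of the caps and channels toward the backbone are used, together with the fact that the one-dimensional connecting arcs do not separate $(-3,3)^n$ for $n\ge2$. The remaining verifications — continuity, injectivity via the marker arcs, the identity $Bd(Q_k(s_k))=Q_k(s_k)$, and that the boundary of the union is the union of the boundaries — are routine and parallel those already carried out for $\Theta$ in Proposition~\ref{p4}.
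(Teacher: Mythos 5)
Your proposal is correct in outline and follows the same overall route as the paper's proof: build an embedding of $I^\infty\times I^\infty$ into $C(X)$ that pulls $S(X)\cap CB(X)$ back exactly onto $\partial(I^\infty)\times(0,1)^\infty$, then compose with an embedding witnessing the $D_2(F_\sigma)$-universality of $\partial(I^\infty)\times(0,1)^\infty$ in $I^\infty\times I^\infty$. The one substantive difference is how the separation-detecting half is produced. The paper does not construct it at all: it invokes \cite[Proposition 5.2]{KS}, which already supplies an embedding $\Psi$ with $\Psi\bigl((q_k)\bigr)\in S(X)$ if and only if $(q_k)\in\partial(I^\infty)$, whose values are nowhere dense continua (hence never obstruct connectedness of the boundary), locates $\Psi\bigl((q_k)\bigr)$ in $[-1,0]\times[-1,1]^{n-1}$ so that it meets $\Theta\bigl((t_k)\bigr)$ only at the origin, and simply takes the union $\Phi=\Psi\cup\Theta$. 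Your punctured spheres $Q_k(s_k)$ with marker arcs are in substance a re-derivation of that cited construction, so what the citation buys is precisely the step you flag as the ``main obstacle'': the verification that no spurious separation occurs when every cap and every channel is open. In the paper that verification is inherited from \cite{KS} for the $\Psi$-part, and the disjoint placement (separation pieces in the half-space where the first coordinate is negative, boundary pieces where it is nonnegative, meeting at a single point) keeps the check for the union manageable; your decision to attach the spheres to the ``backbone'' of $\Theta$ makes the same check messier, and note that $\Theta$ has no backbone arc at all --- its pieces $L_k\cup D_k$ merely accumulate at the origin --- so your attachment scheme needs to be specified with care. Two further repairs: if $\ell_k(s_k)$ has length proportional to $s_k$, then at $s_k=0$ the arc degenerates and $Q_k(0)$ must still touch the rest of the set, or else the value is not a continuum (the paper's analogous arcs $L_k(t)$ have length at least $2^{-k}$ for exactly this reason); and your claim that the boundary of the union is the union of the boundaries uses that the attached pieces are closed, nowhere dense, and meet the $\Theta$-part outside the interiors of its shells, which should be stated.
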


\begin{proof}
Without loss of generality, we can again  assume  that  $(-3,3)^n$ is an open subset of  $X$.
In~\cite[Proposition 5.2]{KS}, an embedding $\Psi:I^\infty\to C(X)$ is constructed such that
\begin{equation}\label{e:psi}
\Psi\bigl((q_k)\bigr)\in  S(X)\quad\text{if and only if}\quad (q_k)\in \partial(I^\infty).
\end{equation}

For each $(q_k)$, continuum $\Psi\bigl((q_k)\bigr)$ can be located in $[-1,0]\times [-1,1]^{n-1}$  as a subset consisting of the segment $L'=[-1,0]\times \{0,\dots,0\}$ and of a subset of the union of  cubes $[\frac{-1}{2k},\frac{-1}{2k+1}]\times [\frac{-1}{2k},\frac1{2k}]^{n-1}$, $k\in \mathbb N$.

We can now combine both embeddings $\Theta$~\eqref{e:theta} and $\Psi$~\eqref{e:psi} to define an embedding $\Phi: I^\omega\times I^\omega \to C(X)$ satisfying
\begin{multline}\label{Phireduction}
\Phi\bigl((q_k), (t_k)\bigr) \in   S(X)\cap CB(X) \quad\text{if and only if}\\
 \bigl((q_k), (t_k)\bigr) \in \partial(I^\infty)\times (0,1)^\infty,
  \end{multline}
by putting
\begin{equation}\label{Phi}
\Phi\bigl((q_k), (t_k)\bigr)=\Psi\bigl((q_k)\bigr)\cup \Theta\bigl((t_k)\bigr).
\end{equation}
Since $\partial(I^\infty)\times (0,1)^\infty$ is strongly $D_2(F_\sigma)$-universal in $I^\infty\times I^\infty$,  there exists, for each $D_2(F_\sigma)$-set $M\subset I^\infty$,  an embedding $\tau: I^\infty \to I^\infty\times I^\infty$ such that
$\tau^{-1}\bigl(\partial(I^\infty)\times (0,1)^\infty\bigr)=M$. Hence, the composition $f=\Phi\tau:I^\infty \to C(X)$  satisfies~\eqref{f}.
 Similarly as in~\eqref{e:varphi}, we can additionally assume that $(0,1)^n$ is an open subset of $X$ and
 \begin{equation}\label{eq:f}
 f(I^\infty)\subset C(I^n)\subset C(X).
 \end{equation}
\end{proof}

 \begin{theorem}\label{t2}
If  $X$ is $\pi$-$n$-Euclidean and no subset of dimension $\le 1$ separates $X$, then  there is a homeomorphism $h:I^\infty\times I^\infty \to C(X)$ such that $h\bigl(\partial I^\infty\times (0,1)^\infty\bigr) = S(X)\cap CB(X)$.
\end{theorem}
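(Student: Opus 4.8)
The plan is to recognize $A:=S(X)\cap CB(X)$ as a $D_2(F_\sigma)$-absorber in the Hilbert cube $\mathcal Q=C(X)$ and then to invoke the uniqueness of absorbers \cite[Theorem 5.5.2]{M}: since $\partial(I^\infty)\times(0,1)^\infty$ is a $D_2(F_\sigma)$-absorber in $I^\infty\times I^\infty$, any two $D_2(F_\sigma)$-absorbers in homeomorphic Hilbert cubes are carried onto one another by a homeomorphism, and this will produce the asserted $h\colon I^\infty\times I^\infty\to C(X)$ with $h\bigl(\partial(I^\infty)\times(0,1)^\infty\bigr)=S(X)\cap CB(X)$. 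Thus the whole argument reduces to verifying the three defining properties of an absorber for $A$.

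Property (1), that $A\in D_2(F_\sigma)$, is already furnished by Proposition~\ref{p5}. For property (3) I would show that $A$ is contained in a $\sigma Z$-set. Writing $A=F_1\setminus F_2$ with $F_1,F_2\in F_\sigma$ as in Proposition~\ref{p5}, one has $A\subset F_1=S(X)\cap C(X)$, so it suffices to see that this $F_\sigma$-set of separating subcontinua is a $\sigma Z$-set in $C(X)$. Here the hypothesis that no subset of dimension $\le 1$ separates $X$ is decisive: every arc is a non-separating subcontinuum, and since arcs are dense in $C(X)$ (cf. Proposition~\ref{p3}), the separating continua are nowhere dense. A standard general-position argument then shows that each closed layer in an $F_\sigma$-decomposition of $S(X)\cap C(X)$ is a $Z$-set—any subcontinuum can be approximated by a nearby non-separating one, so the identity of $C(X)$ can be pushed off that layer—whence $S(X)\cap C(X)$, and a fortiori $A$, lies in a $\sigma Z$-set.

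The core of the proof is property (2), strong $D_2(F_\sigma)$-universality, which I would establish exactly as in the passage from Proposition~\ref{p4} to Lemma~\ref{l5}, now starting from the reducing embedding $\Phi$ of Proposition~\ref{p6} and its localizations $\Phi_{U}$ into hyperspaces of small Euclidean charts $U\subset X$. Given a $D_2(F_\sigma)$-set $M\subset I^\omega$, a compact $K\subset I^\omega$ and an embedding $f\colon I^\omega\to C(X)$ with $f(K)$ a $Z$-set, I would first pick a reduction $\tau\colon I^\omega\to I^\omega\times I^\omega$ with $\tau^{-1}\bigl(\partial(I^\infty)\times(0,1)^\infty\bigr)=M$, available because the model is strongly $D_2(F_\sigma)$-universal. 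Then, following the technique of \cite{BC,Ca} recorded in \cite[Section 3.2]{KS}, I would approximate $f$ by an embedding carrying for each $q$ a one-dimensional part $A(q)$, and for $q\notin K$ attach to $A(q)$, in pairwise disjoint charts $U_i$, the localized continua $\Phi_{U_i}(\tau(q))$, each meeting $A(q)$ in a single point. By the reduction~\eqref{Phireduction} for $\Phi$ together with the localization, the resulting continuum $g(q)$ lies in $S(X)\cap CB(X)$ precisely when $\tau(q)\in\partial(I^\infty)\times(0,1)^\infty$, i.e. when $q\in M$; hence $g^{-1}(A)\setminus K=M\setminus K$, while $g|K=f|K$, $g$ stays uniformly close to $f$, and $g(I^\omega)$ is a $Z$-set.

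The main obstacle, and the place where both standing hypotheses are genuinely used, is to guarantee that the two defining properties are controlled independently during the attachment, so that the product model $\partial(I^\infty)\times(0,1)^\infty$ is faithfully reduced. Concretely, I must ensure that neither the one-dimensional part $A(q)$ and its connecting arcs nor the $n$-dimensional $\Theta$-pieces ever contribute to separation—this is exactly where ``no subset of dimension $\le 1$ separates $X$'' is invoked, so that separation of $g(q)$ is decided solely by the $\Psi$-coordinate—while at the same time the $\Psi$-pieces must be arranged to have connected boundary for every parameter, so that connectedness of $Bd(g(q))$ is decided solely by the $\Theta$-coordinate. Keeping these two controls disjoint, and simultaneously arranging that the whole attached structure is a $Z$-set and that the estimates of \cite[Section 3.2]{KS} go through, is the delicate bookkeeping the construction must carry out. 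Once it is in place, Propositions~\ref{p5} and~\ref{p6} together with properties (2) and (3) identify $A$ as a $D_2(F_\sigma)$-absorber, and \cite[Theorem 5.5.2]{M} completes the proof.
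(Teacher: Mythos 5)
Your strategy coincides with the paper's: verify that $A=S(X)\cap CB(X)$ is a $D_2(F_\sigma)$-absorber in the Hilbert cube $C(X)$ and apply the uniqueness theorem for absorbers \cite[Theorem 5.5.2]{M}. Your property (1) is the paper's (Proposition~\ref{p5}), and your property (2) follows the paper's argument exactly: localize the embedding $f=\Phi\tau$ of Proposition~\ref{p6} into disjoint Euclidean charts, attach the localized copies to the one-dimensional part $A(q)$ at single points, and use the hypothesis that no one-dimensional subset separates $X$ to see that the attachments change neither the separation nor the connected-boundary status, so that $g^{-1}(A)\setminus K=M\setminus K$.

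The genuine gap is in your property (3). You claim that, because arcs are dense in $C(X)$ and non-separating, the closed layers of the $F_\sigma$-set $S(X)\cap C(X)$ are nowhere dense and hence $Z$-sets ``by a standard general-position argument.'' Nowhere density does not imply the $Z$-set property in a Hilbert cube (wild Cantor sets in $I^\infty$ are closed, nowhere dense, and not $Z$-sets), and the pointwise density of non-separating continua does not produce what is actually required: continuous maps $C(X)\to C(X)$, arbitrarily close to the identity, whose images miss a given closed layer of $S(X)\cap C(X)$. Worse, the natural hyperspace perturbations one would use here all \emph{enlarge} a continuum (fattening, attaching arcs), and if $A$ separates two points $x,y$, then every continuum $A'\supset A$ with $x,y\notin A'$ still separates them; so enlargement-type moves can never push $C(X)$ off a layer of separating continua, and it is not clear any ``general position'' trick exists. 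The paper sidesteps this entirely: it quotes \cite[Theorem 5.3]{KS}, which states that $S(X)\cap C(X)$ is an $F_\sigma$-absorber in $C(X)$ (there, property (3) comes for free from (1) and (2) by \cite[Theorem 5.3]{BGM}, a redundancy available for the class $F_\sigma$ but not for $D_2(F_\sigma)$), and an absorber is by definition contained in a $\sigma Z$-set; since $A\subset S(X)\cap C(X)$, property (3) follows. Replacing your general-position paragraph by this citation makes your proof correct and essentially identical to the paper's.
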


\begin{proof}
Since  subsets of dimension $\le 1$ do not separate $X$, $n=\dim X\ge 3$.

(1) By Proposition~\ref{p5}, family $S(X)\cap CB(X)$ is $D_2(F_\sigma)$.

(2) To prove the strong $D_2(F_\sigma)$-universality, we proceed similarly as in the proof of   Lemma~\ref{l5}. We just replace $\varphi$ with embedding $f$ from Proposition~\ref{p6} satisfying~\eqref{eq:f} and  family $C(X)\setminus CB(X)$ with $S(X)\cap CB(X)$.  Then, for an arbitrary fixed $D_2(F_\sigma)$-set $M\subset I^\infty$,  we have

 \emph{$\varphi_U(q)\in S(X)\cap CB(X)$ if and only if  $f(q)\in S(X)\cap CB(X)$ if and only if $q\in M$.}

For $q\notin K$, attaching finitely many pairwise disjoint sets $\varphi_U(q)$,  to the one-dimensional part $A(q)$ at single points does not change the above equivalences  (no one-dimensional set separates $X$) which means that the required embedding $g$ satisfies
$$g^{-1}\bigl(C(X)\setminus CB(X)\bigr)\setminus K=M\setminus K.$$

(3) $S(X)\cap CB(X)$ is contained in the $F_\sigma$-absorber $S(X)\cap C(X)$  in $C(X)$ (see~\cite[Theorem 5.3]{KS}), so in  a  $\sigma Z$-set in $C(X)$.

Thus we conclude that  $S(X)\cap CB(X)$ is a $D_2(F_\sigma)$-absorber in $C(X)$ and the required  homeomorphism $h$ exists.
\end{proof}

\section*{Acknowledgments}
The author is indebted to the organizers of the 11th Workshop on Continuum Theory and Hyperspaces at UNAM, Cuernavaca, M\'{e}xico, 2017, for invitation and its participants for inspiring discussions on the  class $CB(X)$.
 Rusell-Aaron Qui\~{n}ones-Estrella  initiated the study of continua with connected boundaries (at the 10th Workshop in 2016) and Norberto  Ordo\~{n}ez asked about the Borel complexity and characterization of  $CB(X)$; they are also coauthors of  recent paper~\cite{EQOV} devoted to the continua.

\bibliographystyle{amsplain}

\end{document}